\documentclass[12pt]{article}
\usepackage{amsthm}
\usepackage{amsmath}
\usepackage{amssymb}
\usepackage[margin=1in]{geometry}
\usepackage{hyperref}
\usepackage{xcolor}
\usepackage{tikz}
\hypersetup{
    colorlinks,
    linkcolor={black},
    citecolor={black},
    urlcolor={black}
}

\usetikzlibrary{matrix}

\newtheorem{theo}{Theorem}[section]

\newtheorem{lemma}[theo]{Lemma}

\newtheorem{defi}[theo]{Definition}

\newcommand{\ZZ}{\mathbf{Z}} 
\newcommand{\NN}{\mathbb{N}} 
 
\newcommand{\FF}{{\mathcal{F}}}  
\newcommand{\GG}{{\cal G}}

\title{\huge On the Hat Guessing Number of Graphs}
\author{Noga Alon
\thanks
{Department of Mathematics, Princeton University,
Princeton, NJ 08544, and
Schools of Mathematics and
Computer Science, Tel Aviv University, Tel Aviv 69978,
Israel.
Email: {\tt nalon@math.princeton.edu}.
Research supported in part by
NSF grant DMS-1855464 and BSF grant 2018267.}
\and
Jeremy Chizewer
\thanks
{Department of Computer Science, Princeton University,
Princeton, NJ 08544.
Email: {\tt chizewer@princeton.edu}.}
}
\begin{document}
\date{}
\maketitle
\begin{abstract}
The hat guessing number $HG(G)$ of a graph $G$ on $n$ vertices is defined
in terms of the following game: $n$ players are placed on the $n$ vertices
of $G$, each wearing a hat whose color is arbitrarily chosen from a
set of $q$ possible colors.  Each player can see the hat colors of his
neighbors, but not his own hat color.  All of the players are asked to
guess their own hat colors simultaneously, according to a predetermined
guessing strategy and the hat colors they see, where no communication
between them is allowed.  The hat guessing number $HG(G)$ is the largest
integer $q$ such that there exists a guessing strategy guaranteeing at
least one correct guess for any hat assignment of $q$ possible colors.

In this note we construct a planar graph $G$ satisfying $HG(G)=12$,
settling a problem raised in \cite{BDFGM}. We also improve the known
lower bound of $(2-o(1))\log_2 n$ 
for the typical hat guessing  number of the random graph
$G=G(n,1/2)$, showing that it is at least $n^{1-o(1)}$ with probability
tending to $1$ as $n$ tends to infinity. Finally, we consider the linear
hat guessing number of complete multipartite graphs.
\end{abstract}
\section{Introduction}

The following hat guessing game was introduced in \cite{BHKL}.  Let $G$
be a simple graph on $n$ vertices $\{v_1,\ldots,v_n\}$, and let $Q$ be a
finite set of $q$ colors.  The $n$ vertices of the graph are identified
with $n$ players, where each is assigned arbitrarily a hat
colored with one of the colors in $Q$.  A player can only see the hat
colors of his neighbors, i.e., player $i$ sees the hat color of player
$j$ if and only if $v_i$ is connected to $v_j$ in $G$.  After all the
players agree on a guessing strategy, they are asked to guess their own
hat colors simultaneously, with no communication.  The goal of the players
is to ensure that at least one player guesses his hat color correctly.
Let $HG(G)$ denote the maximum number $q$ for which 
there exists a winning guessing strategy for the players.

If $q$ is a prime power, the set $Q$ is identified with the set of
elements of the finite field $F=GF(q)$, and the guessing functions 
are linear (or affine), we call the guessing strategy a linear strategy.

The invariant $HG(G)$ has been studied in several papers including
\cite{BHKL,GG,KL,Szc,Gad,ABST,BDFGM,HL,HIP}. In \cite{BDFGM} the 
authors conjecture that the maximum possible hat guessing number 
of a planar graph is $4$.
The following result shows that it is significantly larger.
\begin{theo} 
\label{planar1}
There exists a planar graph with hat guessing number 12.
\end{theo}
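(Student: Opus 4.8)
The statement only asks for existence, so the plan is to exhibit one concrete planar graph $G$ together with an explicit guessing strategy that wins with $12$ colors, plus a (routine, sparsity‑based) check that $13$ colors cannot be beaten on that same $G$, so that $HG(G)$ is exactly $12$. The lower bound is the whole point, so I will concentrate there and only remark on the upper bound at the end.

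To get $HG(G)\ge 12$ I would use the standard composition philosophy for hat games: pick a small ``host'' graph $H$ that is planar and for which the hat game is won even when the players are only told which of a few color \emph{classes} their own hat lies in --- for a clique $K_m$ this happens precisely when $\sum_i 1/a_i\ge 1$, where $a_i$ is the number of classes allotted to player $i$, and small planar hosts (cliques of size $\le 4$, books, fans, wheels, and related sparse graphs) give several usable options. Fix such an $H$, a class‑count vector $(a_i)$ admitting a winning ``coarse'' strategy, and a factorization $q=a_i t_i$ with $q=12$. Then attach to each host vertex $v_i$ a small planar gadget $\Gamma_i$ whose vertices, together with $v_i$, are meant to ``tile'' the $t_i$ actual colors lying in whichever class the coarse strategy predicts for $v_i$: the players of $\Gamma_i$ must be wired so that each of them sees exactly enough of $H$ to recompute that prediction, and then they split the $t_i$ candidate colors among themselves, one guess each. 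Fixing a planar drawing of $H$, I would insert every $\Gamma_i$ into a single face so that the union is again planar, and take that union to be $G$. The overall strategy is then layered: from what a player sees, extract the coarse coloring of $H$, run $H$'s winning coarse strategy to identify the host vertex $v_i$ it certifies correct and the class it names, and have the $t_i$ players covering that class each bet a distinct color of it. For any actual coloring the certified $v_i$ has its true color in the named class, some covering player bet exactly that color, and we win.

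The main obstacle is the tension between ``strong enough'' and ``still planar.'' Each gadget $\Gamma_i$ needs several players all able to compute the coarse prediction at $v_i$, which in a clique host forces several players to be adjacent to the same few host vertices, threatening a $K_{3,3}$ or $K_5$ subgraph; so the gadgets must be kept lean (stars or trees hung inside a face, possibly with an extra level of indirection so the prediction is relayed rather than seen directly), the host must be chosen with a convenient face structure, and the numbers must still multiply up to $12$ rather than stalling at $4$. Thus the real work is (i) choosing $H$, $(a_i)$, $(t_i)$ and the exact gadget wiring so the product reaches $12$ while planarity survives the amalgamation, and (ii) verifying correctness carefully: that every player's guess is a legitimate function of its own view, that the covering players for each $v_i$ genuinely see enough to reconstruct the coarse prediction, and that no step of the planarity‑forced trimming has secretly weakened a factor. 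Finally, confirming that $13$ colors break the coarse‑level inequality $\sum_i 1/a_i\ge 1$ on this specific $G$ pins the value to exactly $12$.
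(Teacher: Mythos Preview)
Your composition scheme has a genuine conceptual gap. In the hat game every player must guess \emph{her own} hat color, but your plan has the gadget players in $\Gamma_i$ ``split the $t_i$ candidate colors among themselves'' and declares a win because ``some covering player bet exactly that color,'' where ``that color'' is $v_i$'s color. That is not a legal guessing strategy: no matter how $\Gamma_i$ is wired, its vertices can only bet on their own hats, so having $t_i$ of them name the $t_i$ colors in $v_i$'s predicted class tells you nothing about whether any hat was guessed correctly. The multi-guess (class) strategy on the host $H$ cannot be refined to single guesses by adding extra vertices in this way; extra vertices give you extra hats and extra guesses \emph{for those new hats}, not extra guesses for the host hats. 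So as written the lower bound argument proves nothing.

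The valid amplification tricks in the literature run in the opposite direction from what you describe: peripheral players guess their own hats using functions of the central players' colors, chosen so that if \emph{all} peripheral guesses fail, the set of still-possible colorings of the central core is small; the core then finishes off that small set. The paper's proof is exactly of this type. The graph is a central edge $uv$ together with many pairs $x_iy_i$, each $x_i,y_i$ adjacent to each other and to both $u$ and $v$ (so each $\{u,v,x_i,y_i\}$ is a $K_4$, and the union is visibly planar). Each outer pair is assigned a function $f$ from $(h(u),h(v))$ to an unordered pair in $\ZZ_{12}$; $x$ guesses $g_1-h(y)$ and $y$ guesses $g_2-h(x)$, so both fail iff $h(x)+h(y)\notin f(h(u),h(v))$. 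Using one outer pair for every such function, any six colorings of $(u,v)$ are mapped by some $f$ to six disjoint pairs covering $\ZZ_{12}$, which would force $h(x)+h(y)$ to avoid all of $\ZZ_{12}$; hence after all outer pairs fail at most five colorings of $(u,v)$ survive, and a $K_2$ can handle any known set of five colorings. This is the missing idea your scheme lacks: the gadget players constrain the host, not the other way around.

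Your upper-bound sketch is also insufficient. Showing that $\sum_i 1/a_i<1$ for $q=13$ would only rule out your particular coarse scheme on your particular $H$; it says nothing about arbitrary strategies on $G$. A genuine upper bound needs an adversary argument against \emph{every} strategy, as the paper gives: fix six colorings of $(u,v)$ that no $K_2$ strategy can cover, then count to show each outer pair can be colored so both of its vertices fail on all six, forcing a global failure with $13$ colors.
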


We have established this result in September 2020 (as
mentioned in \cite{HIP}). At that time this has been 
the planar graph with the largest hat guessing number 
known. Examples of planar graphs with hat guessing number $6$
appear in \cite{HL} and in \cite{HIP}. A more recent paper 
\cite{LK} contains a construction of a planar graph with
hat guessing number $14$.

Another conjecture suggested in \cite{BDFGM} is that the 
hat guessing number of any graph 
is at most its Hadwiger's number, that is, the
order of the largest clique minor of the graph.
The theorem above provides, of course, a counterexample to 
that as well. For larger Hadwiger numbers $d$ the hat guessing number 
can in fact be at least 
doubly-exponential in $d$. This follows from the results in \cite{HL}, 
as it is easy to show that the 
Hadwiger number of the graphs $G_d(N)$ constructed there and 
discussed in Theorem 1.1 
in that paper is $d+1$. Similarly (though less dramatically) the 
book graph $B_{d,m}$ for $m>m_0(d)$ has 
Hadwiger number $d + 1$ and hat guessing number exceeding $d^d$.

It is interesting to determine or estimate the hat guessing number 
of the random graph 
$G = G(n, 1/2).$ In \cite{BDFGM} it is shown that with high probability,
that is, with probability tending to $1$ as $n$ tends to infinity, 
$(2 - o(1)) \log_2 n \leq HG(G) \leq {n - (1 + o(1)) \log_2 n}$. 
The following result improves the 
lower bound considerably.
\begin{theo}
\label{random}
Let $G = G(n, 1/2)$ denote the binomial random graph. Then with 
high probability, that is, 
with probability tending to $1$ as $n$ tends to infinity, 
$HG(G) \geq n^{1 - o(1)}$.
\end{theo}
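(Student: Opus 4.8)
The plan is to reduce the problem to a subgraph--embedding question. The hat guessing number is monotone under taking subgraphs: if $H$ is a subgraph of $G$, then a winning guessing strategy on $H$ is also winning on $G$, since each player of $G$ may ignore his neighbours outside $H$. Hence it suffices to produce a graph $H$ with $HG(H)\ge n^{1-o(1)}$ that, with high probability, is a subgraph of $G(n,1/2)$. Two properties of such an $H$ are forced. First, $HG(H)\le |V(H)|$ for every graph (a short counting argument: each player is correct on exactly a $1/q$ fraction of the colourings, so if $q>|V(H)|$ the colourings on which somebody is correct cannot cover everything), so we must take $|V(H)|=n^{1-o(1)}$; thus $H$ must have hat guessing number almost equal to its number of vertices. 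Second, a graph on $m$ vertices with substantially more than $m\log_2 n$ edges has, by a first--moment estimate, no copy in $G(n,1/2)$ with high probability, so $H$ must be sparse. We therefore seek a sparse graph whose hat guessing number is within an $n^{o(1)}$ factor of its order.

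For this I would take a book graph (possibly replaced by a modest refinement of one). Let $H=B_{d,m}$ be the graph consisting of a clique on $d$ vertices (the spine), together with $m$ further vertices (the pages), each page adjacent to all $d$ spine vertices and to no other page; here $d=\lfloor \log_2 n/\log_2\log_2 n\rfloor$ and $m$ is any integer with $m_0(d)\le m\le \tfrac12\,n\,2^{-d}$, where $m_0(d)$ denotes the number of pages needed to guarantee $HG(B_{d,m})\ge d^{d}$. The graph $H$ is $d$-degenerate and has about $dm$ edges, hence average degree $O(d)=o(\log n)$, so it is amply sparse; and one has the two estimates $\tfrac12\,n\,2^{-d}=n^{1-o(1)}$ and $d^{d}=n^{1-o(1)}$. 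By the analysis of book graphs recalled in the introduction (see \cite{HL,BDFGM}), $HG(B_{d,m})\ge d^{d}$ once $m\ge m_0(d)$, and it remains to verify that $m_0(d)\le \tfrac12\,n\,2^{-d}$, for which it is enough to know that $m_0(d)\le d^{(1+o(1))d}$. (Should the plain book fail to be this efficient, one substitutes a layered construction assembled from books, arranged so that the order stays $n^{1-o(1)}$ and the degeneracy stays $o(\log n)$.)

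The embedding into $G=G(n,1/2)$ is then straightforward. With high probability $G$ contains a clique $K$ on $d$ vertices, since its clique number is $(2-o(1))\log_2 n\gg d$. Conditioning on $K$, the number of vertices outside $K$ that are adjacent to every vertex of $K$ is a sum of $n-d$ independent $0$--$1$ variables, each equal to $1$ with probability $2^{-d}$, so by a Chernoff bound this number is at least $\tfrac12\,n\,2^{-d}\ge m$ with high probability; choosing any $m$ such vertices as the pages exhibits $B_{d,m}$ as a subgraph of $G$. (For a less rigid $d$-degenerate gadget one argues instead that with high probability every set of at most $d$ vertices of $G$ has at least $n\,2^{-d-1}$ common neighbours --- a Chernoff bound together with a union bound over the at most $n^{d}$ such sets, valid since $n\,2^{-d}\gg d\log n$ when $d=o(\log n)$ --- and then embeds the gadget greedily along a degeneracy ordering of its vertices, placing each new vertex on a common neighbour of its at most $d$ already--placed neighbours while avoiding the at most $|V(H)|$ vertices already used.)

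Putting the two steps together, with high probability $HG(G(n,1/2))\ge HG(B_{d,m})\ge d^{d}=n^{1-o(1)}$, which is the assertion. The embedding step is routine; the real obstacle is the first step --- securing a gadget whose hat guessing number is within an $n^{o(1)}$ factor of its order while keeping it $o(\log n)$-degenerate. Concretely, one must control how many pages (or, in a recursive variant, how large a blow-up is introduced at each level) are needed to push the hat guessing number up to $d^{d}$, and check that this keeps the number of vertices below $n$ (equivalently, that $m_0(d)$ is not much larger than $d^{d}$): a weaker bound here would yield only $HG(G(n,1/2))\ge n^{c-o(1)}$ for some constant $c<1$, rather than the claimed $n^{1-o(1)}$.
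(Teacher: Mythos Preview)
Your outline is exactly the paper's approach: embed a book $B_{d,m}$ with $d\approx \log n/\log\log n$ into $G(n,1/2)$ by first locating a $d$-clique (present since the clique number is $(2-o(1))\log_2 n$) and then taking $\Theta(n/2^d)$ common neighbours as pages via a Chernoff bound. The embedding step, the choice of $d$, and the monotonicity reduction are all as in the paper.

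The gap is the one you yourself flag at the end, and it is the entire technical content of the paper's proof. The existing results you cite do \emph{not} give $m_0(d)\le d^{(1+o(1))d}$: the paper explicitly notes that the theorem of \cite{HIP} (which pins down $HG(B_{d,m})$ exactly) requires $m$ far too large in $d$ to be usable here, and neither \cite{BDFGM} nor \cite{HL} supplies the needed quantitative bound. What the paper adds is a short probabilistic lemma showing that $m=d^{d+3}$ pages already force $HG(B_{d,m})\ge d^{d-2}$: one picks the $m$ page-guessing functions $f_i:\ZZ_q^d\to\ZZ_q$ (with $q=d^{d-2}$) uniformly at random and checks, by a union bound over the $\binom{q^d}{d^d}$ subsets $S$ of spine-colourings of size $d^d$, that with positive probability every such $S$ is mapped onto $\ZZ_q$ by some $f_i$; this caps the set of spine-colourings on which all pages fail at fewer than $d^d$, which the $d$-clique can then handle. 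Without this lemma (or a concrete substitute), your proposal is a correct plan with its crucial step missing.
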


Our final result in this note deals with linear guessing
strategies.  Here $q$ is a prime power, the colors are identified
with the elements of the finite field $F=GF(q)$, and 
each vertex guesses according to
an affine function of the colors of its neighbors.
\begin{defi}[\cite{ABST}]
The linear hat guessing number of $G$, denoted $HG_{lin}(G)$, 
is the largest prime power
$q$ for which there is a winning linear guessing strategy for $G$
over the field 
$GF(q)$
\end{defi}
The linear hat guessing number of $K_{n,n}$, the complete 
bipartite graph with $n$ vertices
in each color class, is studied in \cite{ABST}. Here we consider 
the complete $m$-partite graph,
$K^{(m)}_n$, with $n$ vertices in each color class.
\begin{theo}
\label{linear}
There exists an absolute constant $c$ so that the following holds.
Let $F$ be a field with at least $(c \log (nm))^m$ elements. Then 
there is no linear hat guessing scheme for $K_n^{(m)}$ over $F$.
\end{theo}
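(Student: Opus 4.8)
The plan is to translate the statement into a covering (equivalently, polynomial‑vanishing) question and then to induct on the number of parts $m$. Writing a colouring as $x=(x^{(1)},\dots,x^{(m)})$ with $x^{(i)}\in F^{n}$ the colours of class $i$, and writing $g_{i,j}(x^{(-i)})\in F$ for the affine guess of the $j$‑th player of class $i$ (a function of the colours $x^{(-i)}$ of the other $m-1$ classes), player $(i,j)$ is correct on $x$ exactly when $x^{(i)}_j=g_{i,j}(x^{(-i)})$, i.e.\ when $x$ lies on the affine hyperplane $H_{i,j}=\{x: x^{(i)}_j=g_{i,j}(x^{(-i)})\}$. Thus a winning linear scheme exists if and only if the $mn$ hyperplanes $H_{i,j}$ cover $F^{mn}$, equivalently if and only if the degree‑$mn$ polynomial $P(x)=\prod_{i,j}\bigl(x^{(i)}_j-g_{i,j}(x^{(-i)})\bigr)$ vanishes identically on $F^{mn}$. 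The structural feature I would exploit is that within a single class the hyperplanes $H_{i,1},\dots,H_{i,n}$ have linearly independent normals (the coordinate $x^{(i)}_j$ occurs in $H_{i,j}$ with coefficient $1$ and in no other $H_{i,j'}$), so each class contributes an "$n$‑dimensional transversal" chunk of the cover; this rigidity is what should force $q$ to be small, since the naive count (any covering of $F^{mn}$ by $mn$ hyperplanes needs $mn\ge q$) is far too weak.

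For the inductive step I would peel off one class, say $V_m$. Its guesses $g_{m,1},\dots,g_{m,n}$ depend only on $y:=(x^{(1)},\dots,x^{(m-1)})$. If we fix the colours of $V_m$ to a vector $v=(v_1,\dots,v_n)\in F^{n}$ and delete from $F^{(m-1)n}$ the $n$ affine hyperplanes $\{y:g_{m,j}(y)=v_j\}$, then on the remaining set no player of $V_m$ is ever correct, so the induced (still linear) strategy for classes $V_1,\dots,V_{m-1}$ must already win there. This yields a linear scheme for $K_n^{(m-1)}$ that is winning on $F^{(m-1)n}$ minus $n$ hyperplanes; after a suitable restriction that recovers an honest product structure, it becomes a genuine $K_n^{(m-1)}$‑instance over a colour budget that has shrunk by a factor of order $\log(nm)$. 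The logarithm should appear because $v$ must be chosen to defeat all $n$ players of $V_m$ while keeping the surviving domain large enough to still contain a full lower‑dimensional sub‑instance, and making such a choice — controlled simultaneously against the $O(nm)$ "correct‑guess" hyperplanes that appear across all levels of the recursion — is done by a union bound / Chernoff‑type estimate, costing $\Theta(\log(nm))$ of the budget. Iterating this reduction down to the base case, the multiplicative losses combine to give $q\le (c\log(nm))^{m}$.

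The base case is $m=1$, the empty graph on $n$ vertices, which admits no winning scheme once $q\ge2$; alternatively one can stop at $m=2$ and invoke the $O(\log n)$‑type upper bound for the linear hat guessing number of $K_{n,n}$ from \cite{ABST}. I expect the main obstacle to be exactly the interaction between the peeled class and the rest: fixing the colours of $V_m$ alters the guessing functions of every player in $V_1,\dots,V_{m-1}$, and the reduced strategy is only known to win off a few hyperplanes rather than on all of $F^{(m-1)n}$. The delicate part is therefore to choose $v$ (and the restriction) so that (i) the reduced problem is again a bona‑fide linear hat‑guessing instance, and (ii) the deletions accumulated over all $m$ peelings never exhaust the surviving structured set when $q$ is only polylogarithmic in $n$ — this amounts to weighing the total number of "correct‑guess" hyperplanes against the size of the structured domain at each stage. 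Controlling this accumulation so that each level costs only a single $\log(nm)$ factor is the crux; once that bookkeeping is in place the induction closes and Theorem~\ref{linear} follows.
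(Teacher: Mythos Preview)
Your proposal is a plan rather than a proof, and the step you yourself flag as ``the crux'' is exactly where the gap lies. When you fix the colours of the last class $V_m$ to some $v\in F^n$, the remaining strategy for $V_1,\dots,V_{m-1}$ is still a linear scheme over the \emph{same} field $F$; nothing in the peeling reduces the field size. Your inductive hypothesis is a statement about fields of a certain cardinality, so to invoke it you need the reduced instance to live over a strictly smaller field (or at least an algebraically structured subset on which affine maps over $F$ restrict to affine maps over the smaller structure). An arbitrary subset of $F$ of size $q/\log(nm)$ will not do this, and you give no mechanism for producing one that does. The vague ``suitable restriction that recovers an honest product structure'' and the ``union bound / Chernoff-type estimate'' are carrying the entire weight of the argument and are not specified; I do not see how to realise them so that each level costs only a single $\log(nm)$ factor while preserving linearity.

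The paper's proof proceeds along an entirely different route and avoids induction on $m$ altogether. It encodes each colouring $x\in F^{nm}$ by the $nm$-element set
\[
F(x)=\bigl\{(i,k,\,x_{ik}-f_{ik}(x)-b_{ik}):1\le i\le n,\;1\le k\le m\bigr\}\subset[n]\times[m]\times F,
\]
and similarly $G(x)$ with the constants $b_{ik}$ dropped. The key structural observation is that, because within each class the variables $x_{ik}$ appear in disjoint equations, the families $\{F(x)\}$ and $\{G(x)\}$ are $R$-spread with $R=|F|^{1/m}$. One then applies the spread-family lemma from the improved sunflower literature (Alweiss--Lovett--Wu--Zhang, as refined by Frankston--Kahn--Narayanan--Park, Rao, and Tao): once $R\ge C\log(nm)$, a random $2$-colouring of the ground set contains a member of each family in opposite colour classes with positive probability, yielding disjoint $F(x)$ and $G(x')$. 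Their difference produces a colouring on which every guess misses. Thus the logarithm and the exponent $m$ arise in a single shot from the spread parameter $R=|F|^{1/m}$ and the spread-lemma threshold $R\gtrsim\log(nm)$, rather than from an $m$-step recursion.
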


The above results are proved in the next three sections. The final
brief section contains several open problems.

\section{Proof of Theorem~\ref{planar1}}

\begin{proof}[Proof of Theorem~\ref{planar1}]
Let $G$ be the planar graph consisting of two adjacent vertices $u,v$
together with $m = 66^{144}$ pairs of adjacent vertices $x_i, y_i$,
each adjacent to both $u$ and $v$. See Figure \ref{planar}, and note
that we make no attempt to minimize $m$, which can be easily reduced.
Call $uv$ the central pair of $G$, and all edges $x_iy_i$ the outer pairs.
Let $\mathcal{F}$ denote the family of all $m = 66^{144}$ functions $f$
from ordered pairs of elements of the cyclic group $\ZZ_{12}$ to unordered
pairs of elements of $\ZZ_{12}$. Associate each outer pair with one
such function according to some fixed bijection between these functions
and the outer pairs.  The guessing strategies of the vertices in the
outer pairs are defined as follows. Consider such a pair $xy$, let $f$
be the function corresponding to it, and let $h(u), h(v), h(x), h(y)$ be
the hat colors of $u, v, x$ and $y$, respectively, viewed as elements of
$\ZZ_{12}$. The value of $f(h(u),h(v))$ is an unordered pair of elements
$\{g_1,g_2\}$ of $\ZZ_{12}$ with, say, $g_1 < g_2$. Let $x$ guess that
its hat color is $g_1 - h(y)$ (computed in $\ZZ_{12}$) and let $y$ guess
that its hat color is $g_2 - h(x)$ (computed in $\ZZ_{12}$.) Note that
both $x, y$ guess incorrectly if and only if the sum of hat colors of
$x,y$ is not one of the two elements in $f(h(u),h(v))$. We claim that
for any fixed hat colors of all vertices in the outer pairs there are at
most $5$ distinct colorings of the hats of $u, v$ in which all vertices
in the outer pairs guess incorrectly. Indeed, if there are at least $6$
such colorings then there is some function $f \in \mathcal{F}$ (in fact
many such functions) that maps these $6$ colorings to pairwise disjoint
pairs of elements of $\ZZ_{12}$ whose union covers all of $\ZZ_{12}$.
Let $x,y$ be an outer pair corresponding to this function.  Then, since
$x, y$ fail to guess correctly in all these $6$ colorings, it follows that
the sum of the hat colors of $x,y$ cannot be any element of $\ZZ_{12}$,
a contradiction. This proves the claim and shows that in case all the
vertices in the outer pairs guess incorrectly there are only $5$ possible
distinct colorings of the pair $u, v$.  The vertices $u, v$ see all hats
of the vertices in the outer pairs and hence can compute this set of
at most $5$ distinct colorings of their hats. They can then take care
of these colorings ensuring that at least one of them guesses correctly
since as shown in \cite{BDFGM, HL} a clique of size $2$ can handle any
(known) set of $5$ colorings.  This shows that the hat guessing number
of our graph is at least $12$. We next show that it is smaller than $13$
(and hence it is exactly $12$).

Consider the game on this graph with $13$ colors. We will pre-commit
to using a  member of a fixed set of $6$ colorings for the vertices
of the central pair. For each coloring of these  vertices, each
vertex in an outer pair can guess correctly in at most $13$ possible
colorings of the outer pair, one for each color of the other vertex
in the pair. Hence each outer pair can handle at most $26$ different
colorings of the pair. Therefore for all $6$ colorings of the central
pair, this outer pair can have at least one vertex guessing correctly
in at most $26\cdot 6= 156$ possible colorings of the pair. Since there
are $13^2=169$ possible colorings of an outer pair, there is at least
one (in fact, at least $169-156=13$) 
coloring where they guess incorrectly for all $6$ colorings of the
central pair. For each outer pair we can choose a coloring in which both
its vertices guess incorrectly for all $6$ colorings.  This shows
that for any fixed set of $6$ possible colorings of the central pair,
there are choices for the colors of all outer pairs (for any number of
outer pairs) in which all vertices of the outer pairs guess incorrectly.

As shown  in \cite{BDFGM, HL} there
exists a set of $6$ colorings, for example the set $\{0,1\}\times\{0,1,2\}$,
such that the central vertices cannot guess correctly on all of
them. Suppose the central pair gets one of these $6$ colorings, and every
outer pair gets a coloring in which both its members  guess
incorrectly for any of these $6$ colorings. Each of the two vertices in the
central pair sees the colors of all other vertices, and knows the
guessing function of each vertex. Crucially, even with this
information, each of these two vertices knows that the central pair
may well have any of the above $6$ colorings. As for any strategy
of the central pair, both its members guess incorrectly on at least
one of these $6$ colorings, this shows
that there is no
winning strategy for the graph with $13$ colors. Therefore the hat guessing
number of this graph is exactly $12$.
\end{proof}

\begin{figure}
\begin{center}
\begin{tikzpicture}
  \matrix (m) [matrix of math nodes,row sep=8em,column sep=2em, nodes={circle,draw,fill} ]
  {
     . & & & & & & & &  \\
        & . & . &  . & . &  &  & . & . \\
     . & & & & & & & &\\ };
  \path[]
	(m-1-1) 	edge (m-3-1)
			edge (m-2-2)
			edge (m-2-3)
			edge (m-2-4)
			edge (m-2-5)
			edge (m-2-8)
			edge (m-2-9)
	(m-3-1)	edge (m-2-2)
			edge (m-2-3)
			edge (m-2-4)
			edge (m-2-5)
			edge (m-2-8)
			edge (m-2-9);
   \path[thick]
	(m-2-2)	edge (m-2-3)
	(m-2-4)	edge (m-2-5)
	(m-2-8)	edge (m-2-9);
	
   \path[ultra thick,dash pattern=on 0 cm off 0.5cm on .15cm off .15cm on .15cm off .15cm on .15cm off 
   .15cm on .15cm off .15cm on .15cm off 1 cm]
   	(m-2-5) 	edge (m-2-8);

\end{tikzpicture}\\
\end{center}
\caption{A planar graph with hat guessing number 12.}
\label{planar}
\end{figure}
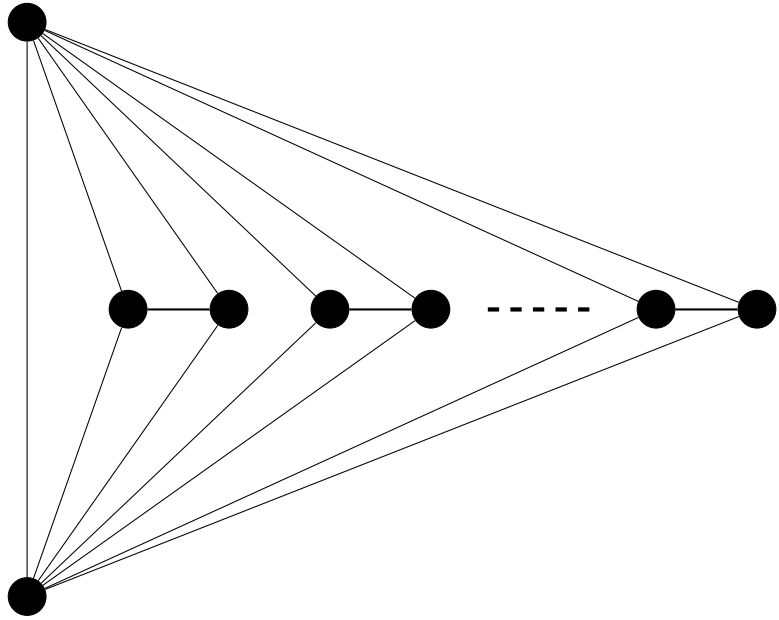
\section{Proof of Theorem~\ref{random}}

Our lower bound on the hat guessing number of the 
random graph uses the fact that the random graph contains a 
a sufficiently
large book graph as a subgraph with high probability.

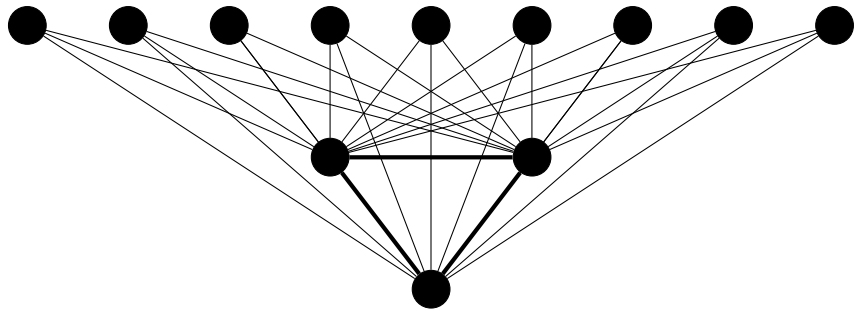
\begin{figure}
\begin{center}
\begin{tikzpicture}
  \matrix (m) [matrix of math nodes,row sep=3em,column sep=2em, nodes={circle,draw,fill} ]
  {
     . & . & . & . & . & . & . & . & .  \\
       &   &   &  . &   & .  &   &   &  \\
       &   &   &    & . &    &   &   & \\ };
  \path[]
	(m-1-1) 	edge (m-2-4)
			edge (m-2-6)
			edge (m-3-5)
	(m-1-2) 	edge (m-2-4)
			edge (m-2-6)
			edge (m-3-5)
	(m-1-3) 	edge (m-2-4)
			edge (m-2-6)
			edge (m-3-5)
	(m-1-4) 	edge (m-2-4)
			edge (m-2-6)
			edge (m-3-5)
	(m-1-5) 	edge (m-2-4)
			edge (m-2-6)
			edge (m-3-5)
	(m-1-6) 	edge (m-2-4)
			edge (m-2-6)
			edge (m-3-5)
	(m-1-7) 	edge (m-2-4)
			edge (m-2-6)
			edge (m-3-5)
	(m-1-8) 	edge (m-2-4)
			edge (m-2-6)
			edge (m-3-5)
	(m-1-9) 	edge (m-2-4)
			edge (m-2-6)
			edge (m-3-5);	
  \path[ultra thick]
	(m-2-4)	edge (m-2-6)
			edge (m-3-5)
	(m-2-6)	edge (m-3-5);

\end{tikzpicture}\\
\end{center}
\caption{The book graph $B_{3,9}$}
\label{book}
\end{figure}

The book graph 
denoted $B_{d,m}$ is the graph obtained by taking a complete graph
on $d$ vertices, which we call the central clique, adding
to it an independent set of size $m$, and joining each of its
vertices to all the
vertices of the central clique. Figure \ref{book}
shows a drawing of $B_{3,9}$. The hat guessing numbers of book 
graphs are studied in \cite{BDFGM}, \cite{HL}, \cite{HIP}.
The following result is proved in \cite{HIP}.
\begin{theo} [\cite{HIP}]
For every fixed $d$ and $m$ sufficiently large as a function of
$d$, 
$$HG(B_{d,m}) = \sum_{i=1}^d i^i + 1.$$
\end{theo}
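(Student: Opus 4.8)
The plan is to reduce the hat-guessing game on $B_{d,m}$ (for $m$ large) to a purely combinatorial covering problem about the central clique $K_d$, and then to resolve that problem by an extremal induction that manufactures the value $\sum_{i=1}^d i^i+1$.

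\textbf{The reduction.} Let $Q$ be the color set, $|Q|=q$, and identify colorings of the $d$ central vertices with points of $Q^d$. Call a set $S\subseteq Q^d$ \emph{handleable} if the central clique has a strategy guessing correctly on every coloring in $S$; writing each central vertex's guess as a function $g_j$ of what it sees, this means $S$ is covered by the $d$ functional graphs $\{\bar h : h_j=g_j(h_{-j})\}$, one per coordinate direction. Since each central vertex also sees all page colors and the pages form an independent set, I would first observe that for every fixing $a=(a_p)$ of the page colors the central vertices can compute the surviving set $S(a)=\{\bar h : \mathrm{guess}_p(\bar h)\ne a_p\text{ for all }p\}$, and that the team wins exactly when $S(a)$ is handleable for every $a$ (the clique's strategy may depend on $a$, so the requirement decouples over $a$). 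A set $T$ can survive entirely ($T\subseteq S(a)$ for some $a$) iff no page's guess is surjective on $T$; hence a single page rules out $T$ precisely when its guessing function restricts to a surjection onto $Q$. Because non-handleability is upward closed, it then suffices to devote one dedicated page to each minimal non-handleable set, provided such a set has size at least $q$; and a set of size $<q$ can never be ruled out. This yields the clean equivalence: for $m$ large, the team wins with $q$ colors iff every subset of $Q^d$ of size at most $q-1$ is handleable. Thus $HG(B_{d,m})$ equals (up to realizability of the extremal configuration in $q$ colors) the minimum size $C_d$ of a non-handleable subset of a $d$-dimensional grid.

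\textbf{Matroid reformulation.} Next I would reinterpret handleability through matroid union. Covering $S$ by one cell per line in direction $j$ expresses $S$ as an independent set of the partition matroid $M_j$ whose parts are the lines in direction $j$, so $S$ is handleable iff it is a union of independent sets in $M_1,\dots,M_d$. By the Nash-Williams matroid-union theorem this holds iff $|S'|\le\sum_{j=1}^d\rho_j(S')$ for every $S'\subseteq S$, where $\rho_j(S')=|\pi_{\hat\jmath}(S')|$ is the number of distinct lines in direction $j$ met by $S'$, i.e. the size of the projection deleting coordinate $j$. Consequently $C_d$ is the minimum size of a finite grid set $S$ with $|S|>\sum_{j=1}^d\rho_j(S)$, and the theorem reduces to the extremal statement $C_d=\sum_{i=1}^d i^i+1$. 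For $d=1,2$ this is immediate, the extremal configurations being two collinear points and the full $2\times 3$ box, of sizes $2$ and $6$.

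\textbf{The extremal computation and the obstacle.} Finally I would prove $C_d=\sum_{i=1}^d i^i+1$ by induction on $d$, slicing $S$ along the last coordinate as $S=\bigsqcup_t S^{(t)}$ with $S^{(t)}\subseteq\ZZ^{d-1}$. Writing the excess $\phi_k(A)=|A|-\sum_{j\le k}\rho_j(A)$, the slice decomposition gives the identity $\phi_d(S)=\sum_t\phi_{d-1}(S^{(t)})-|\bigcup_t S^{(t)}|$, which drives both directions: for the lower bound I must show $\phi_d(S)\le 0$ whenever $|S|\le\sum_{i\le d} i^i$, and for the matching construction I must exhibit a set of size $\sum_{i\le d} i^i+1$ with positive excess. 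The construction I expect to work is recursive, stacking suitably overlapping copies of the $(d-1)$-dimensional extremal set across roughly $d$ new values of the last coordinate, arranged so that the single cell the top vertex can remove from each slice never suffices to make all slices coverable; the bookkeeping should then contribute the increment $d^d$ and hence the total $\sum_{i=1}^d i^i+1$. \textbf{The main obstacle} is exactly this quantitative step: one needs a tight bound on the maximal excess $E_{d-1}(s)=\max_{|A|=s}\phi_{d-1}(A)$ and must balance the gain $\sum_t E_{d-1}(|S^{(t)}|)$ from using many slices against the loss $|\bigcup_t S^{(t)}|$, with the optimal number of slices (of order $d$) producing the $d^d$ term. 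Controlling $E_{d-1}$ sharply, rather than merely knowing the threshold $C_{d-1}$, is the delicate heart of the argument and the step I expect to require the most care.
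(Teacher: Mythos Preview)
There is nothing to compare against: the paper does not prove this theorem. It is quoted verbatim from \cite{HIP} and explicitly attributed there (``The following result is proved in \cite{HIP}''), with no argument given. The paper then remarks that this exact result is useless for its purposes because the $m$ it requires is astronomically large, and instead proves the self-contained Lemma~\ref{lemma}, a crude probabilistic lower bound $HG(B_{d,m})\ge d^{d-2}$ valid already at $m=d^{d+3}$, which is all that is needed for Theorem~\ref{random}. So your write-up is not a blind version of the paper's proof; it is an attempt to reproduce the argument of \cite{HIP}.

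On the merits of what you wrote: the reduction to ``every $(q-1)$-subset of $Q^d$ is coverable by $d$ functional graphs, one per direction'' is correct and is indeed the standard framework of \cite{BDFGM,HL,HIP}; your matroid-union reformulation via Nash--Williams, turning handleability into the projection inequality $|S'|\le\sum_j |\pi_{\hat\jmath}(S')|$, is a clean way to phrase it. However, you stop exactly where the content lies. You yourself flag that the identity $C_d=\sum_{i\le d} i^i+1$ is ``the main obstacle'' and only describe in words what a proof would have to do: control the slice-excess function $E_{d-1}(s)$ sharply and balance it against the union term. You neither state the needed bound on $E_{d-1}$ nor give the recursive construction achieving equality. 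As written this is a proof plan with the decisive inductive step missing, not a proof; whether your slicing identity $\phi_d(S)=\sum_t\phi_{d-1}(S^{(t)})-|\bigcup_t S^{(t)}|$ alone is enough to push through the sharp lower bound is precisely the question, and you have not answered it.
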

We cannot use this result to prove Theorem \ref{random}, since
the assertion above holds only for 
$m$ which is very large as a function of $d$. For our purpose  
we need to show that the hat guessing number of $B_{d,m}$ is close
to the expression above 
even if $m$ is much smaller. This is proved in the following
lemma.  It is worth noting that
both the value of $m$ and that of $q$ can be 
slightly improved in this lemma, but the estimate below suffices 
for our purpose.
\begin{lemma}  For $m = d^d \cdot d^3, HG(B_{d,m}) \geq q = d^d/d^2$.
\label{lemma}
\end{lemma}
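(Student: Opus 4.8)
The plan is to generalize the construction used above for the planar graph. Put $q = d^d/d^2 = d^{d-2}$ and $m = d^{d+3}$, and assume $d\ge 4$ (for $d\le 3$ one has $q\le d = HG(K_d)$, so the central clique $K_d\subseteq B_{d,m}$ already wins). Identify the colour set with $[q]=\{1,\dots,q\}$, call the central vertices $v_1,\dots,v_d$ and the pages $w_1,\dots,w_m$ (each page sees exactly the $d$ central vertices). I would have each page $w_r$ guess according to a function $g_r\colon[q]^d\to[q]$ of the colouring $\chi\in[q]^d$ it sees on the clique, so that $w_r$ is correct iff $c_r=g_r(\chi)$, where $c_r$ is its own colour. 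Once the page colours $\vec c=(c_1,\dots,c_m)$ are fixed, the clique sees $\vec c$ and plays a subgame on $K_d$ in which it must produce a correct guess whenever the colouring of the clique lies in the \emph{bad set}
$$
S_{\vec c}=\{\chi\in[q]^d:\ g_r(\chi)\ne c_r\ \text{for all }r\}.
$$
So the whole strategy works as long as the $g_r$ are chosen so that, for every $\vec c$, the set $S_{\vec c}$ is one that $K_d$ can handle, meaning $K_d$ has a strategy correct on every colouring in $S_{\vec c}$.

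Next I would pin down which sets $K_d$ can handle. A vertex $v_j$, seeing the other $d-1$ central colours, catches exactly one colouring on each line in direction $j$ (the line through a point obtained by fixing all coordinates but the $j$-th). Hence $K_d$ handles $S$ iff $S=P_1\cup\cdots\cup P_d$ with each $P_j$ meeting every direction-$j$ line at most once, and by Hall's theorem this holds iff $|T|\le\sum_{j=1}^d|\pi_{\widehat j}(T)|$ for every $T\subseteq S$, where $\pi_{\widehat j}$ deletes coordinate $j$. Now if $|T|>\sum_{j=1}^d|\pi_{\widehat j}(T)|$, then the Loomis--Whitney inequality $|T|^{d-1}\le\prod_{j=1}^d|\pi_{\widehat j}(T)|$ together with AM--GM gives $|T|>d\,|T|^{(d-1)/d}$, i.e.\ $|T|>d^d$. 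Consequently $K_d$ handles every set of at most $d^d$ colourings, and it is enough to arrange $|S_{\vec c}|\le d^d$ for all $\vec c$.

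To produce such $g_r$, observe that $|S_{\vec c}|\le d^d$ for all $\vec c$ is equivalent to: no $(d^d+1)$-subset $X\subseteq[q]^d$ is contained in some $S_{\vec c}$; and $X\subseteq S_{\vec c}$ for some $\vec c$ iff $g_r(X)\ne[q]$ for every $r$ (pick $c_r\in[q]\setminus g_r(X)$). Thus I need: for every $X\subseteq[q]^d$ with $|X|=d^d+1$ there is a page $r$ with $g_r(X)=[q]$, i.e.\ $g_r$ maps $X$ onto $[q]$. I would get this by the probabilistic method: choose $g_1,\dots,g_m$ independently and uniformly at random. For a fixed such $X$ the values $g_r(\chi)$, $\chi\in X$, are i.i.d.\ uniform, so $\Pr[g_r(X)\ne[q]]\le q(1-1/q)^{d^d+1}\le q\,e^{-d^d/q}=d^{d-2}e^{-d^2}\le e^{-d^2/2}$; the chance that no page works for $X$ is at most $e^{-md^2/2}$; and since there are at most $q^{d(d^d+1)}\le e^{2d^{d+2}\ln d}$ choices of $X$, the expected number of uncovered $X$ is at most $e^{2d^{d+2}\ln d-md^2/2}<1$ because $m=d^{d+3}$. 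Hence a good choice of the $g_r$ exists, completing the proof.

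I expect the real work to be the second paragraph: proving that $K_d$ handles every set of at most $d^d$ colourings. The reduction of ``handleable'' to a transversal/Hall condition and the invocation of Loomis--Whitney are the crux; the probabilistic step in the third paragraph is routine and, as the remark before the lemma indicates, wasteful, so both $q$ and $m$ can be improved.
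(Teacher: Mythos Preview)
Your argument is correct and follows the same two-step scheme as the paper: random page functions $g_r$ guarantee (via a union bound over $(d^d{+}1)$-subsets of $[q]^d$) that the bad set $S_{\vec c}$ always has size at most $d^d$, and then the central clique handles any such set. The only difference is that the paper cites \cite{BDFGM,HL,HIP} for the fact that $K_d$ handles any known set of at most $d^d$ colorings, whereas you supply a self-contained proof of this via Hall's theorem and the Loomis--Whitney inequality; your probabilistic bookkeeping differs cosmetically but is equivalent.
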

\begin{proof}
Let $q = d^d/d^2 = d^{d - 2}$. Call the 
clique of $B$ the central clique, and the other vertices the 
outer ones. We claim that there are $m$ 
functions $f_1, \dots , f_m$ from $\ZZ_q^d$, the set of possible 
colorings of the central clique to $\ZZ_q$ 
so that for every subset $S\subset \ZZ_q^d$ of size $|S|=d^d$ 
at least one of the functions restricted 
to $S$, $f_i|_S$ is onto. Indeed, choose these functions randomly 
and uniformly among all possible 
functions from $\ZZ_q^d$ to $\ZZ_q$. For a fixed set, $S$, the 
probability that a fixed function 
misses a specific element of $\ZZ_q$ is $(1-1/q)^{d^d}$. By the union 
bound, the probability that at least one element is missed is at most 
\[q(1-1/q)^{d^d}\leq qe^{-d^d/q} < qe^{-d^2} < 1/2\]
Thus the probability that none of the functions is onto is less
than $(1/2)^m = (1/2)^{d^d\cdot d^3}$. The number of subsets of size
$d^d$ of the set of colorings is \[{q^d\choose d^d} < d^{d^d\cdot
d^2}=2^{d^d\cdot d^2\cdot \log d}\] and the claim follows from the
union bound as $2^{d^d\cdot d^2\cdot \log d}\cdot (1/2)^{d^d\cdot d^3}
< 1$, so with positive probability, for every subset $S\subset \ZZ_q^d$
of size $|S|=d^d$ there is a function $f_i|_S$ which is onto $\ZZ_q$,
proving the existence of a set of $m$ such functions.  Returning to the
proof of the lemma let $f_1, \dots , f_m$ be as in the claim, and let
these be the guessing functions of the outer vertices of $B$. By the
claim it is clear that for any fixed hat colors of the outer vertices
there are less than $d^d$ colorings of the central clique for which all
outer vertices guess incorrectly, since for every set of $d^d$ colorings
at least one of the guessing functions is onto. Thus the central vertices
can compute this known set of less than $d^d$ colorings and as shown in
\cite{BDFGM,HL,HIP} can then handle them.
\end{proof}

\begin{proof}[Proof of Theorem~\ref{random}]
It is well known (c.f., e.g., \cite{AS}) that with high 
probability, the graph ${G = G(n,1/2)}$ contains a clique of size 
$(2 + o(1)) \log_2 n$. Let $d$ be the largest number so 
that $d^d \cdot d^3 \leq 0.5n/2^d$.  
This gives $d = (1+o(1))\log n/\log \log n $. 
Let $K$ be a clique of size $d$ in $G$.  Such a clique exists 
with high probability since $d$ is (much) smaller than $2 \log_2 n$.
The expected number of its
common neighbors is $(n-d)/2^d$ and by the standard estimates for
binomial distributions (c.f., e.g., \cite{AS}, Appendix A),
with high 
probability it has more than $0.5n/2^d \geq d^d \cdot d^3$ 
common neighbors. 
This means that with high 
probability $G$ contains a book $B_{d,m}$ with $m = d^d \cdot d^3$ 
and the result follows from 
Lemma~\ref{lemma}. \end{proof}

\section{Proof of Theorem~\ref{linear}}

The result is proved using an idea in \cite{ALWZ}, where the authors
obtain an improved bound for the sunflower lemma,
and its modifications in \cite{FKNP, Ra, Tao}.
\begin{defi}[\cite{Tao}, Definition 1]
We call a family, $\FF$, 
of sets, $R$-spread if for every nonempty set $Z$
a uniformly random set $F$ chosen from $\FF$ satisfies 
$\mathbb{P}(S\subseteq F)\leq R^{-|Z|}$
\end{defi}

\begin{lemma}[Proposition 5 of \cite{Tao} (following \cite{ALWZ, FKNP, Ra})]
\label{l21}
Let $r,w\geq 2$ be natural numbers, and let $R\geq Cr\log(wr)$ for 
a sufficiently large absolute constant $C$ not 
dependent on $r,w$. Let $\FF=(F_i)_{i\in I}$
be a finite, $R$-spread family of subsets 
of a finite set $T$, each of which having
cardinality at most $w$. Let $V$ be a random subset of $T$ where each
$t\in T$ independently lies in $V$ with probability $1/r$. 
Then with probability greater than $1-1/r$,
$V$ contains an element of $\FF$.
\end{lemma}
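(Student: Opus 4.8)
The plan is to prove this by the iterative refinement method of \cite{ALWZ}, in the sharpened form of \cite{FKNP, Ra, Tao}. Write $p=1/r$, so the hypothesis reads $pR\ge C\log(wr)$; the only feature of $\FF$ I will use is the counting form of $R$-spreadness, namely that for every nonempty $Z\subseteq T$ at most a fraction $R^{-|Z|}$ of the members of $\FF$ contain $Z$ (equivalently $|\{F\in\FF:Z\subseteq F\}|\le|\FF|\,R^{-|Z|}$). I want to bound the failure probability $\mathbb{P}(V\text{ contains no member of }\FF)$ by $1/r$. The organizing idea is to reveal $V$ gradually and to maintain a shrinking family of ``residual'' sets that still need to be covered, driving the maximum residual size down to $0$ in $O(\log w)$ stages.

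First I would generate $V$ (or a random subset dominated by it) as a union $W_1\cup\cdots\cup W_\ell$ of independent rounds, $\ell=O(\log w)$, and show this union already covers a member. After the first $i$ rounds have exposed $W_{\le i}=W_1\cup\cdots\cup W_i$ I track the residual family $\{F\setminus W_{\le i}:F\in\FF\}$, restricted to its members of size at most $w_i$. The invariant I want to propagate is that this restricted residual family remains spread, losing at most a constant factor in its spread parameter in total across all rounds, while $w_i\le w/2^{\,i}$. Once $w_i<1$ some residual is empty, i.e.\ the corresponding original $F$ lies inside $W_{\le\ell}\subseteq V$, which is success.

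The heart of the matter is a one-round refinement lemma: if $\GG$ is $R$-spread with all sets of size at most $w$ and $W$ is a suitably sparse random set, then with high probability over $W$ either some $G\in\GG$ satisfies $G\subseteq W$, or the collection of residuals $\{G\setminus W:G\in\GG,\ |G\setminus W|\le w/2\}$ contains a subfamily that is again spread with essentially the same parameter. I would prove this by a first-moment argument against two bad events. The first is that too few $G$ have at least half their elements captured by $W$; since $|G\cap W|$ is a binomial with mean proportional to $|G|$, concentration rules this out once the round density times $R$ is large. The second, and more delicate, is that the surviving residuals fail to be spread, i.e.\ some nonempty core $Z$ is contained in too many of them. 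Here the spread hypothesis on $\GG$ is exactly what controls the count: for $Z$ to be a frequent residual-core every relevant $G$ must contain $Z$ yet avoid it in $W$, and summing $|\GG|\,R^{-|Z|}$ against the avoidance probability over candidate cores $Z$ shows the offending choices of $W$ carry little mass, provided the density times $R$ exceeds a constant multiple of the logarithm of the number of candidate cores.

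Finally I would assemble the rounds by a union bound, obtaining a member of $\FF$ inside $V$ with at least constant probability; to reach the demanded $1-1/r$ I would tighten each round's first-moment bound by an extra factor of $r$, which is paid for precisely by the $\log r$ summand in the threshold $R\ge Cr\log(wr)$ (equivalently one may repeat the whole scheme $O(\log r)$ times with fresh randomness). The main obstacle is the spread-preservation half of the refinement lemma: one must verify that passing to residuals of bounded size does not secretly concentrate the family on a small common core, and, crucially, that the loss in the spread parameter is only a constant factor over all $O(\log w)$ rounds rather than a constant factor per round, since the latter would force $R$ to be polynomial in $w$. Controlling this loss so that a single logarithm $\log(wr)$ suffices --- rather than the $\log^2 w$ that a naive equal-rounds analysis yields --- is exactly the improvement of \cite{FKNP, Ra, Tao}, achieved by replacing the round-by-round union bound with a single global encoding (entropy-compression) estimate; reproducing that sharpening is where the real work lies, while everything else reduces to binomial concentration and bookkeeping.
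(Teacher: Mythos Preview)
The paper does not prove this lemma at all: it is quoted verbatim as Proposition~5 of Tao's blog post (building on \cite{ALWZ, FKNP, Ra}) and used as a black box in the proof of Theorem~\ref{linear}. So there is no ``paper's own proof'' to compare your proposal against.

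That said, your sketch is a faithful outline of the argument in those cited sources: the multi-round revelation of $V$, the one-round refinement lemma that halves the residual size while approximately preserving spreadness, and the observation that the naive per-round analysis gives $\log^2 w$ and that the sharper $\log(wr)$ bound requires the encoding/entropy refinement of \cite{FKNP, Ra, Tao}. You have correctly identified where the real work is (controlling the total loss in the spread parameter across rounds) and what pays for the extra factor of $r$ in the success probability. For the purposes of this paper, however, none of that needs to be reproduced; the lemma is simply invoked.
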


\begin{proof}[Proof of Theorem~\ref{linear}]

Denote the vertices of 
$K^{(m)}_n$  by $v_{ij}$, $1 \leq i \leq n, 1 \leq j \leq m$, let
$x_{ij} \in F$ be the hat-value of $v_{ij}$, put $p=|F|$ and let
the linear guessing function of $v_{ik}$ be
\[f_{ik}(x_{ij}: 1 \leq i \leq n, 1 \leq j \leq m, j \neq k)+b_{ik}\]

Put $w=nm$, let $T:=[n]\times[m]\times Z_p$, with $|T|=pnm$.
For each of the $p^{nm}$ possible vectors $x=(x_{ik}) \in Z_p^{nm}$ define
two subsets of size $w=nm$ of $T$ as follows.
\[F(x)=\left\{\Big(i,k, x_{ik}- f_{ik}(x_{ij}: 1 \leq i 
\leq n, 1 \leq j \leq m, j \neq k)-b_{ik}\Big): 
1 \leq i \leq n, 1 \leq k \leq m\right\},\]
\[G(x)=\left\{\Big(i,k,x_{ik}- f_{ik}(x_{ij}: 1 \leq i 
\leq n, 1 \leq j \leq m, j \neq k)\Big):
1 \leq i \leq n, 1 \leq k \leq m\right\}\]
Let $\FF$ be the family of all $p^{nm}$ sets $F(x)$ as above, and let
$\GG$ be the family of all $p^{nm}$ sets $G(x)$.
It is easy to check that each of these two families is $R$-spread
for $R=p^{1/m}$. Indeed, any set containing some given fixed set $Z$ must
satisfy some specific family of $|Z|$ linear equations.
At least $|Z|/m$ of these equations correspond to the same value of $k$,
by the pigeonhole principle, and these 
equations are linearly independent, as
each $x_{ik}$ that appears among them appears only in one of them.
So the number of solutions is at 
most $p^{w-|Z|/m}$, and the total number of 
sets is $p^{w}$. By Lemma~\ref{l21} with $r=2$ there are sets
$F \in \FF$ and $G \in \GG$ which are disjoint.  Indeed, if we
color the elements of $T$ randomly, uniformly and independently 
by $2$ colors
then with positive probability the first color class contains 
a member of $\FF$ and the second a member of $\GG$.

Taking the difference provides a vector of colors
$x=(x_{ij})$ for which none of the differences
$x_{ik}-(f_{ik}(x)+b_{ik})$ vanishes.  This
means that for the corresponding coloring
no vertex guesses correctly, completing the proof. 
\end{proof}

\section{Open Problems}
One of the questions raised in
\cite{ABST} is whether there exists a function 
$f :\NN \to \NN$ such that if 
$G$ is $d$-degenerate, then $HG(G) \leq f(d)$. It seems plausible
that  such a function exists.  If so, then
the maximum possible hat guessing number of any planar graph 
is bounded by an absolute constant, as planar graphs are
$5$ degenerate.  More generally, this would imply that the hat guessing
number of any graph can be upper bounded by a function of its
Hadwiger number, as graphs with Hadwiger number $k$ are known to
be $O(k \sqrt {\log k})$-degenerate, see \cite{Kos}, \cite{Tho}.

Another interesting question is the typical asymptotic behavior of
the hat guessing number of the random graph $G(n,1/2)$. In
particular, is it $o(n)$ with high probability?

The final question we suggest here is that of estimating the
largest possible hat guessing number of a graph $G'$ obtained from
a graph $G$ with hat guessing number $q$ by the addition of a single
vertex (connected to all vertices of $G$). The graphs $G_d(N)$
constructed in \cite{HL} can be used to show that for arbitrarily
large values of $q$ the number can
increase to more than $q^2$. This is obtained by letting 
$G$ be the 
(disconnected) graph obtained from $G'=G_d(N)$ by deleting its
unique vertex  which is connected to all other ones. Thus every connected
component of $G$ is a copy of $G_{d-1}(N)$. As shown in
\cite{HL}, for sufficiently large $N$, if the hat guessing
number of $G$ (which equals that of each of its connected components)
is $q$, then the hat guessing number of $G'$ is
$q^2+q$.

It will be
interesting to establish an upper bound for $HG(G')$ as a function
of $HG(G)$, or to show that no such function exists.
\vspace{0.2cm}

\noindent
{\bf Acknowledgment:}\,
We thank Ben Przybocki for telling us about \cite{LK}.

\end{document}